\documentclass[11pt,a4paper]{article}
\usepackage[utf8]{inputenc}
\usepackage[T1]{fontenc}
\usepackage{lmodern}

\usepackage[margin=3cm]{geometry}
\usepackage{microtype}
\usepackage{mathrsfs}

\usepackage{todonotes}

\usepackage[english]{babel}

\usepackage{mathtools, amsmath, amssymb, amsthm, slashed}
\numberwithin{equation}{section} 

\theoremstyle{plain} 
\newtheorem{theorem}{Theorem}[section]
\newtheorem{lemma}[theorem]{Lemma}
\newtheorem{proposition}[theorem]{Proposition}
\newtheorem{corollary}[theorem]{Corollary}

\theoremstyle{definition} 
\newtheorem{definition}[theorem]{Definition}

\theoremstyle{remark} 
\newtheorem{remark}[theorem]{Remark}

\usepackage[hidelinks]{hyperref}
\usepackage{tikz}
\usetikzlibrary{arrows.meta,calc,decorations.pathmorphing,positioning}
\usepackage{tikz-cd}

\usepackage{enumitem}
\setlist[enumerate]{label=(\alph*)}
\title{A note on coupled Dirac-harmonic maps}
\author{Balázs Márk Békési}
\date{\today}

\usepackage[
	backend=biber,
	style=alphabetic,
	maxnames=99,
	minnames=99,
	maxcitenames=99,
	mincitenames=99,
	maxalphanames=99,
	minalphanames=99
]{biblatex}

\begin{document}
	\maketitle
	
	\begin{abstract}
		We construct coupled Dirac-harmonic maps from manifolds of the form $S^1\times M$ where $M$ is a spin manifold equipped with a pair of harmonic spinors satisfying a norm condition. This method produces coupled Dirac-harmonic maps from closed manifolds.
	\end{abstract}
	\textbf{Subject classification:} 53C43, 53C27, 53C22 \medskip\\
	\textbf{Keywords: }{Dirac-harmonic map, Dirac-geodesic, closed geodesic.}
	\section{Dirac-harmonic maps}
	Dirac-harmonic maps are the simplest fermionic modification of the Dirichlet energy for maps and were introduced in \cite{diracHarmonic} as an extension of the theory of harmonic maps. Motivated by particle physics a twisted spinor field $\Psi$ is introduced to the Lagrangian
	$$
	L[u,\Psi] = \frac{1}{2}\int_M \|du\|^2 + \langle \Psi, \slashed{D}^u\Psi \rangle \, d\mathrm{Vol}_g
	$$
	where $u:(M,g) \rightarrow (N,h)$ is a smooth map between Riemannian manifolds, $\|du\|^2 = \mathrm{tr}_g(u^*h)$ is the Hilbert-Schmidt norm, $M$ is a spin manifold with spinor bundle $\Sigma M$ equipped with a Hermitian metric $\langle \cdot, \cdot \rangle_{\Sigma M}$ and $\Sigma M \otimes u^*TN$ with the induced metric $\langle \cdot, \cdot \rangle$, $\Psi \in \Gamma(\Sigma M \otimes u^*TN)$ a twisted spinor and the twisted Dirac operator $\slashed{D}^u$ is the composition
	$$
	\Gamma(\Sigma M\otimes u^*TN) \longrightarrow \Gamma(T^*M\otimes \Sigma M\otimes u^*TN) \stackrel{\mathrm{cl}}{\longrightarrow} \Gamma(\Sigma M\otimes u^*TN)
	$$
	where the first arrow denotes the action of $\nabla^{\Sigma M}\otimes u^*\nabla^h$ with $\nabla^{\Sigma M}$ being the spin connection and $\mathrm{cl}$ is the Clifford multiplication. The Euler-Lagrange equation of $L$ is given by
	\begin{align}\label{eq:euler_lagrange}
	\begin{cases}
		\slashed{D}^u \Psi &= 0\\
		\Delta u & = \mathcal{R}(u,\Psi)
	\end{cases}
	\end{align}
	where $\Delta u = \mathrm{tr}_g(\nabla du)$ is the tension field defined with the product connection $\nabla = (\nabla^g)^*\otimes u^*\nabla^h$ built out of the Levi-Civita connections $\nabla^g,\nabla^h$. The curvature term is 
	\begin{align*}
		\mathcal{R}(u,\Psi) &= \frac{1}{2}\mathrm{tr}_g(R^N(\langle \Psi, \mathrm{cl}_{\bullet}(\Psi) \rangle_{\Sigma M}))du(\bullet)\\
		&= \frac{1}{2} \sum_{i=1}^{\dim M} \langle \Psi^\alpha, \mathrm{cl}_{e_i}(\Psi^\beta) \rangle_{\Sigma M} R^N(\theta_\alpha,\theta_\beta)du(e_i) 
	\end{align*}
	where $e_i$ is an orthonormal frame on $TM$, $\Psi = \Psi^\alpha \otimes \theta_\alpha$ locally and the Riemann curvature tensor $R^N$ with the convention $$R^N(X,Y)Z = \nabla^h_X\nabla^h_YZ - \nabla^h_Y\nabla^h_XZ - \nabla^h_{[X,Y]}Z.$$
	\begin{definition}
		A \textbf{Dirac-harmonic map} is a pair $(u,\Psi)$ solving the system (\ref{eq:euler_lagrange}). A Dirac-harmonic map $(u,\Psi)$ is said to be \textbf{uncoupled} if $\mathcal{R}(u,\Psi)\equiv 0$ and \textbf{coupled} otherwise.
	\end{definition}
	
	Obviously if $N$ is flat or $\Psi\equiv 0$ then any Dirac-harmonic map is uncoupled, i.e. $u$ is simply a harmonic map. 
	While there are constructions of coupled Dirac-harmonic maps from non-compact manifolds \cite{diracharmonicexamples}, \cite{ammannexamples} and also geodesics \cite{diracgeo} there is also a substantial amount of uncoupling phenomena known in the closed domain case. For instance in \cite{yanguncoupling} Dirac-harmonic maps between compact surfaces $u:M\rightarrow N$ satisfying the degree-genus condition $g_M=g_N = 0$ or $|g_M - 1|< 2|\deg \,u||g_N - 1|$ are shown to be uncoupled, in \cite{mo2010some} uncoupling is shown from closed domains with positive scalar curvature, in \cite{sunKahler} certain Dirac-harmonic maps between K\"ahler manifolds are shown to uncouple and in \cite{ammanndiracindex} uncoupling is connected to examples from index theory. Together with the kernel jump phenomenon considered in \cite{ammannquestion} 
	the question arises whether there are any coupled Dirac-harmonic maps from closed manifolds.
	\vspace{0.314cm}
	
	In this note, the main result is a product construction in Proposition \ref{prop:product} with coupled periodic Dirac-geodesics. We construct coupled Dirac-geodesics relying on the framework of \cite{diracgeo} and an observation in Lemma \ref{lemma:holonomy} on the holonomy of spinors around periodic Dirac-geodesics. In particular, we can construct coupled Dirac-harmonic maps from compact manifolds and answer the question of \cite{ammannquestion}.
	
	\paragraph{On independent related work of Jost-Zhu}
	After the completion of the note, the author was made aware of the work of Jost-Zhu \cite{JostZhu} where coupled Dirac-harmonic maps from tori $\mathbb{T}^2$ with arbitrary metric and spin structure are obtained with constrained variational methods. The constraint has the same geometric origin as our spinor holonomy condition we exploited for our coupled examples.

	\paragraph{Acknowledgement} The author would like to thank Renan Assimos for introducing him to the question \cite{ammannquestion} in 2024 and Jürgen Jost and Jingyong Zhu for sharing privately their manuscript \cite{JostZhu}. The author's research is funded by the European Research Council (ERC) through StG 101165368 “MAGNETIC”. The use of LLMs reduces to a last (grammar) check at the end of the work. All the mathematical ideas, computations and written text are due to the author.

	\section{Periodic Dirac-geodesics}
	\paragraph{Spinors over $S^1$} The spin structures on $S^1$ are realised by the $\mathrm{Spin}(1) \cong \mathbb{Z}/2\mathbb{Z}\cong\{\pm 1\}$ bundles
	\begin{align*}
		P_0 &= \{(t,k\pi) \in S^1\times S^1 \, \mid \, 0\leq t \leq 2\pi, k=0,1 \}\\
		P_{\frac{1}{2}} &= \{(t,\tfrac{1}{2}t) \in S^1\times S^1\, \mid \, 0\leq t \leq 4\pi\}
	\end{align*}
	where the bundle projections are given by the projection to the first factor and the $\mathrm{Spin}(1)$-action is given by $(a,b).(-1) = (a,b + \pi)$. We refer to the first as spin $0$ structure and to the second as the spin $\tfrac{1}{2}$ structure, in short spin $\nu$ for $\nu \in \{0,\tfrac{1}{2}\}$. The spinor bundles $\Sigma_\nu S^1$ are topologically $S^1\times \mathbb{C}$ but their sections, the spinors, differ in the following way:	
	$$
	\Gamma(\Sigma_\nu S^1)\cong \{ \gamma \in \mathscr{C}^{\infty}([0,2\pi], \mathbb{C}) \mid  \gamma^{(k)}(0)=(-1)^{2\nu}\gamma^{(k)}(2\pi) \text{ for }k \in \mathbb{N}_0\}.
	$$
	Thus for $\nu = 0$ we identify with periodic sections and for $\nu = \tfrac{1}{2}$ with anti-periodic ones. Especially the spin $\tfrac{1}{2}$ case will produce interesting examples.
	
	\paragraph{Dirac-geodesics}
	In \cite{diracgeo} Dirac-harmonic maps from $[0,1]$ into $(N,h)$ are considered for the first time and the system (\ref{eq:euler_lagrange}) is reformulated in Lemma 2.1 of \cite{diracgeo} into
	\begin{align}\label{eq:dirac_geo}
		\nabla_{\dot \gamma}\dot \gamma + R^N(X,Y)\dot \gamma &= 0\\
		\nabla_{\dot\gamma} X&= 0\\
		\nabla_{\dot\gamma} Y&= 0
	\end{align}
	where $\gamma:[0,1] \rightarrow (N,h)$, and $X = \mathrm{Re} \Psi$ and $Y = \mathrm{Im}\Psi$ are interpreted as vector fields along $\gamma$. One can easily prove that $\|\dot\gamma\|,\|X\|,\|Y\|$ and $h(X,Y)$ are constant along the curve $\gamma$. Thus, after a reparametrisation, we can always assume that $\|\dot \gamma\| = 1$.
	
	We are interested in solutions of Equation (\ref{eq:dirac_geo}) where the endpoints of $\gamma$ close up with matching tangents. These are referred to as \textbf{periodic Dirac-geodesics} and $\gamma$ is then interpreted as a map from $S^1$.
	\begin{definition}
		A periodic Dirac-geodesic $(\gamma, \Psi)$ is said to be \textbf{spin $\nu$} if $S^1$ is equipped with the spin $\nu$ structure.
	\end{definition}
	\begin{remark}
		The previous definition uses $H^1(S^1,\mathbb{Z}/2\mathbb{Z})$ for labelling Dirac-geodesics. A similar definition can be made for any spin domain $M$.
	\end{remark}
	
	\begin{lemma}\label{lemma:holonomy}
		Let $(\gamma, \Psi)$ be a spin $\nu$ Dirac-geodesic in $N$. Then the parallel transport satisfies $X(1) = (-1)^{2\nu}X(0)$ and $Y(1) = (-1)^{2\nu}Y(0)$.
	\end{lemma}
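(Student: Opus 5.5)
The plan is to read off the twisting of $\Psi$ directly from the description of $\Gamma(\Sigma_\nu S^1)$ given above. First I would fix the framework. The twisted spinor bundle $\Sigma_\nu S^1\otimes \gamma^*TN$ has complex rank $\dim N$, since $\Sigma_\nu S^1$ is a complex line bundle. On the cut interval $[0,2\pi]$ (equivalently $[0,1]$ after rescaling) we trivialise $\Sigma_\nu S^1$ by a parallel unit spinor $\sigma$ for the spin connection; on $S^1$ this connection is flat. In this trivialisation a section becomes $\Psi(t)=\sigma(t)\otimes \Psi_0(t)$, where $\Psi_0$ is a section of $\gamma^*TN\otimes\mathbb{C}$ along the interval. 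This is exactly the identification through which \cite{diracgeo} writes $X=\mathrm{Re}\,\Psi$ and $Y=\mathrm{Im}\,\Psi$ as vector fields along $\gamma$, and I would take that identification as given.

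The key step is to analyse what the closing condition means in this trivialisation. The identification of $\Gamma(\Sigma_\nu S^1)$ with (anti-)periodic functions says that the parallel frame $\sigma$ satisfies $\sigma(2\pi)=(-1)^{2\nu}\sigma(0)$ as elements of the fibre over the point $0\sim 2\pi$. For the spin $0$ structure the trivial double cover gives a global parallel section. For the spin $\tfrac12$ structure the connected cover $t\mapsto \tfrac12 t$ reverses the sign after one loop. The periodic Dirac-geodesic $\Psi$ is a genuine smooth section over $S^1$, and $\gamma$ closes up with matching tangent, so $\gamma^*TN$ is an honest bundle over $S^1$ and $\Psi(0)=\Psi(2\pi)$ in the same fibre. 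Writing both sides in the trivialisation gives
\begin{equation*}
\sigma(0)\otimes\Psi_0(0)=\sigma(2\pi)\otimes \Psi_0(2\pi)=(-1)^{2\nu}\sigma(0)\otimes\Psi_0(2\pi),
\end{equation*}
so $\Psi_0(2\pi)=(-1)^{2\nu}\Psi_0(0)$. The sign $(-1)^{2\nu}$ is real, so taking real and imaginary parts yields $X(1)=(-1)^{2\nu}X(0)$ and $Y(1)=(-1)^{2\nu}Y(0)$.

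Finally I would connect this to parallel transport. By equations (\ref{eq:dirac_geo}), $X$ and $Y$ are parallel along $\gamma$, so $X(1)=P_\gamma X(0)$, where $P_\gamma$ is parallel transport around the loop. The statement therefore says that $X(0)$ and $Y(0)$ are eigenvectors of the holonomy of $\gamma$ with eigenvalue $(-1)^{2\nu}$.

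The main obstacle is step two: making rigorous that the Dirac-geodesic reformulation of \cite{diracgeo}, stated on $[0,1]$, uses a parallel trivialisation of the spinor line. One must also check that this trivialisation is compatible with the definition of the spin structures $P_\nu$. In particular, for $\nu=\tfrac12$ the frame on $[0,2\pi]$ is the lift $t\mapsto(t,\tfrac12 t)$ of $P_{1/2}$, which returns to $(2\pi,\pi)=(2\pi,0).(-1)$. I would verify this directly from the explicit bundles $P_0$ and $P_{1/2}$.
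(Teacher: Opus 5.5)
Your proposal is correct and follows the paper's route. The paper's proof is the one-line observation that $X$ and $Y$ are parallel and that sections of $\Sigma_\nu S^1$ are (anti-)periodic in the trivialisation over the cut interval; you spell this out with the parallel frame $\sigma$, $\sigma(2\pi)=(-1)^{2\nu}\sigma(0)$ (which you verify via the lift $t\mapsto(t,\tfrac12 t)$ in $P_{1/2}$), and that is exactly the detail the paper leaves implicit.
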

	\begin{proof}
		This follows from $X$ and $Y$ being parallel and the (anti)-periodic behaviour observed for the sections of $\Sigma_\nu S^1$.
	\end{proof}
	With all the preparation the roadmap for explicit constructions is clear: We have to solve $\nabla_{\dot \gamma}\dot \gamma + R^N(X,Y)\dot \gamma = 0$ with $S^1$ as a domain and depending on the spin $\nu$ select the solutions where the holonomy matches the one prescribed by the spinor bundle $\Sigma_\nu S^1$. This last condition is a major restriction on the solution space. 
	\subsection{Two-dimensional targets}
	In the following subsection, we restrict our attention to two-dimensional oriented targets $(N,h)$ without boundary. By using the orientation we can define a complex structure $J$ by rotating a tangent vector by $\frac{\pi}{2}$ and denote by $\omega(X,Y) = h(JX,Y)$ its associated symplectic form.
	
	The next proposition shows that the holonomy condition of Lemma \ref{lemma:holonomy} is a "quantisation condition" for the total curvature of fill-ins of Dirac-geodesics and also of the constant $\omega_{\Psi}:=\omega(X,Y)$ associated to the spinor $\Psi$.
	\begin{proposition}\label{prop:quantisation}
		Let $\gamma:S^1\rightarrow N$ be an embedded separating\footnote{i.e. $N \setminus \gamma(S^1)$ is disconnected} coupled Dirac-geodesic and assume that a component $N_\gamma$ of $N \setminus \gamma(S^1)$ has compact closure with boundary $\gamma$. Then
		\begin{enumerate}
			\item $	\frac{1}{2\pi}\int_{N_\gamma} K_N\, dA \in (\nu +\mathbb{Z})$ where $K_N$ denotes the Gaussian curvature.
			\item If $K_N \circ \gamma \equiv \kappa \neq 0$ is constant then $$\omega_{\Psi} = \frac{2\pi (k+\nu)}{\kappa \,L[\gamma]}$$ where $k+\nu = \chi(N_\gamma) - \frac{1}{2\pi}\int_{N_\gamma}K_N\, dA$ where $\chi(N_\gamma)$ is the Euler characteristic of $N_\gamma$ and $L[\gamma]$ the length of $\gamma$.
		\end{enumerate}
	\end{proposition}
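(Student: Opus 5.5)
The plan is to combine Lemma \ref{lemma:holonomy} with the Gauss--Bonnet theorem for the compact surface with boundary $\overline{N_\gamma}$. The key observation is that in dimension two the curvature term $R^N(X,Y)\dot\gamma$ is a multiple of $J\dot\gamma$. The Dirac-geodesic equation therefore says that $\gamma$ is a curve of prescribed geodesic curvature. Independently, the holonomy condition pins down the rotation angle of parallel transport around $\gamma$.

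First, I use that $N$ is two-dimensional to write $R^N(X,Y)Z = K_N\big(h(Y,Z)X - h(X,Z)Y\big)$. For an orthonormal oriented frame $\dot\gamma, J\dot\gamma$ one checks that $h(Y,\dot\gamma)X - h(X,\dot\gamma)Y = -\omega(X,Y)\,J\dot\gamma$. This is just the determinant of the coefficient matrix of $X,Y$ in that frame. Hence the first equation of (\ref{eq:dirac_geo}) becomes
\[
\nabla_{\dot\gamma}\dot\gamma = K_N(\gamma)\,\omega_\Psi\, J\dot\gamma ,
\]
up to a sign that I will fix by careful orientation bookkeeping. So the geodesic curvature of the unit-speed curve $\gamma$ is $k_g = \pm K_N(\gamma)\,\omega_\Psi$. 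Coupledness means this term does not vanish identically. In particular $\omega_\Psi \neq 0$, so $X$ and $Y$ are nonzero and linearly independent.

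Next comes the holonomy. Parallel transport around the closed loop $\gamma=\partial N_\gamma$ is a rotation by some angle $\theta$. Measured against the tangent $\dot\gamma$, which returns to itself, a parallel field turns by $-\int_\gamma k_g\,ds$. So $\theta \equiv -\int_\gamma k_g\,ds \pmod{2\pi}$. Gauss--Bonnet, $\int_{N_\gamma}K_N\,dA + \int_\gamma k_g\,ds = 2\pi\chi(N_\gamma)$ with $\gamma$ oriented as the boundary, then gives $\theta \equiv \int_{N_\gamma}K_N\,dA \pmod{2\pi}$. By Lemma \ref{lemma:holonomy}, $X(1)=(-1)^{2\nu}X(0)$ with $X\neq 0$, so $\theta\in 2\pi(\nu+\mathbb{Z})$. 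This is (a). If $\gamma$ is traversed against the boundary orientation, only the sign of $\theta$ flips, and $\nu+\mathbb{Z}$ is symmetric, so (a) is unaffected.

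For (b), with $K_N\circ\gamma\equiv\kappa$ the geodesic curvature is constant, $\int_\gamma k_g\,ds = \kappa\,\omega_\Psi\,L[\gamma]$ up to the orientation sign. Gauss--Bonnet then yields $\kappa\,\omega_\Psi\,L[\gamma] = 2\pi\chi(N_\gamma) - \int_{N_\gamma}K_N\,dA = 2\pi(k+\nu)$. Here $k+\nu\in\nu+\mathbb{Z}$ by (a), since $\chi(N_\gamma)\in\mathbb{Z}$. Dividing by $\kappa L[\gamma]\neq 0$ gives the formula.

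The main obstacle is consistently fixing signs. These come from the curvature convention, the choice of $J$ versus the boundary orientation of $\partial N_\gamma$, and the direction in which $\gamma$ traverses it. A sign error would change $k+\nu$ to $-(k+\nu)$. I would handle this by assuming $\gamma$ is parametrised as the oriented boundary, possibly after reversing it. I would then verify the formula on the model case of a latitude circle in the round sphere.
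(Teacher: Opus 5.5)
Your proposal is correct and follows the paper's proof essentially step for step. The identity $R^N(X,Y)\dot\gamma=-K_N\,\omega_\Psi\,J\dot\gamma$ turns the Dirac-geodesic equation into $k_g=K_N\,\omega_\Psi$; the sign you wrote is the right one, so the hedge is unnecessary. Gauss--Bonnet on $\overline{N_\gamma}$, together with $\dot\theta=-k_g$ and Lemma \ref{lemma:holonomy}, then gives (a), and integrating the constant geodesic curvature gives (b). Your remarks that coupledness forces $\omega_\Psi\neq 0$ (so $X\neq 0$ and the holonomy angle is well defined), and that (b) needs $\gamma$ oriented as $\partial N_\gamma$, make explicit what the paper leaves implicit.
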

	\begin{proof}
		By the unit-speed assumption on $\gamma:[0,\ell]\rightarrow N$ we know that $\dot\gamma, J \dot \gamma$ is an orthonormal frame along $\gamma$. In this frame we see $$ R(X,Y)\dot\gamma = (K_N\circ \gamma)(h(\dot \gamma, Y)X - h(\dot\gamma, X)Y) = - (K_N\circ \gamma)\omega(X,Y)J\dot\gamma = -(K_N\circ \gamma) \omega_{\Psi}J\dot\gamma $$ and combining with (\ref{eq:dirac_geo}) 
		\begin{equation}\label{eq:curv_relation}
		(k_g - (K_N\circ \gamma) \, \omega_\Psi)J\dot \gamma =0
		\end{equation}
		where $k_g = h(\nabla_{\dot \gamma}\dot \gamma, J\dot\gamma)$ is the geodesic curvature. Denote by $\theta$ the angle between $X$ and $\dot\gamma$ and recall the classical relation $\dot \theta = -k_g$. Now (a) follows by an application of Gauss-Bonnet
		$$
		2\pi \chi(N_\gamma) - \int_{N_\gamma}K_N\, dA =  \int_{\gamma}k_g \, dt = - \int_\gamma \dot\theta \, dt = \theta(0)-\theta(\ell)
		$$
		and the fact that $\theta(0)-\theta(\ell)\, \mathrm{mod}\,2\pi$ measures the holonomy around $\gamma$. (b) follows by
		$$
		\omega_\Psi \kappa L[\gamma] = \omega_\Psi\int_{\gamma}(K_N \circ \gamma)\,dt =\int_\gamma k_g \, dt = 2\pi \chi(N_\gamma)- \int_{N_\gamma}K_N\, dA.
		$$
	\end{proof}
		
	\paragraph{Dirac-geodesics in surfaces of revolution} Here, the simple ideas is that on a surface of revolution the parallels are a class of periodic curves for which the Dirac-geodesic equations become easy to solve. Consider the profile curve $\sigma:I \rightarrow \mathbb{R}^2$ defined by $\sigma(s) = (r(s),h(s))$ with $r(s)>0$ satisfying the unit-speed condition $\dot r^2 + \dot h^2= 1$. The surface of revolution $N_\sigma$ with profile curve $\sigma$ is induced by $F:I\times S^1 \rightarrow \mathbb{R}^3$ defined by
	$$
	F(s,\theta) = (r(s)\cos(\theta),r(s)\sin(\theta), h(s))
	$$
	and is equipped with the pullback metric $h = ds^2 + r^2d\theta^2$.
	\begin{proposition}\label{prop:parallels_dirac_geo}
		The separating parallel $t \mapsto (s_0,\tfrac{t}{r(s_0)})$ can be extended to a Dirac-geodesic if $\dot r(s_0) = k+\nu$ with $k+\nu$ determined by Proposition \ref{prop:quantisation} (b).
	\end{proposition}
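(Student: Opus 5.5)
We want a parallel $\gamma(t)=(s_0,t/r(s_0))$, unit speed, of length $L[\gamma]=2\pi r(s_0)$, together with parallel fields $X,Y$ along $\gamma$ satisfying $\nabla_{\dot\gamma}\dot\gamma + R^N(X,Y)\dot\gamma=0$ and the spin $\nu$ holonomy condition of Lemma \ref{lemma:holonomy}. The plan is to compute the geodesic curvature of the parallel and the holonomy of the Levi-Civita connection around it. Then we choose $\omega_\Psi=\omega(X,Y)$ so that the relation (\ref{eq:curv_relation}) from the proof of Proposition \ref{prop:quantisation} holds. Finally we check that the holonomy matches the one forced by the spin $\nu$ structure precisely when $\dot r(s_0)=k+\nu$.

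First, for the metric $ds^2+r^2d\theta^2$ we compute the basic quantities. Since $\dot\gamma=\frac{1}{r}\partial_\theta$, we get $\nabla_{\dot\gamma}\dot\gamma=-\frac{\dot r}{r}\partial_s$. So the geodesic curvature is $k_g=\pm\dot r(s_0)/r(s_0)$, with the sign fixed by the orientation $J$. The Gaussian curvature is $K_N=-\ddot r/r$, which is constant along the parallel, say equal to $\kappa$. Integrating gives $\int_\gamma k_g\,dt = \pm 2\pi\dot r(s_0)$. On the other hand, by Gauss--Bonnet, as in the proof of Proposition \ref{prop:quantisation}(a), $\int_\gamma k_g\,dt = 2\pi\chi(N_\gamma)-\int_{N_\gamma}K_N\,dA$. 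This agrees with the direct computation: on a cap, $\int K\,dA=-2\pi\int\ddot r\,ds=2\pi(\dot r(s_{\text{end}})-\dot r(s_0))$, and smooth closing forces $\dot r(s_{\text{end}})=\pm1$. Hence the rotation of a parallel field relative to $\dot\gamma$ after one loop is $\theta(0)-\theta(\ell)=2\pi\dot r(s_0)$, up to sign and orientation convention.

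Second, we choose the spinor. Take any constant $\omega_\Psi$, and let $X,Y$ be parallel along $\gamma$ with $\omega(X,Y)=\omega_\Psi$; such fields exist by picking initial vectors at $\gamma(0)$ and transporting them. The equation $\nabla_{\dot\gamma}\dot\gamma+R(X,Y)\dot\gamma=0$ reduces, by the computation in the proof of Proposition \ref{prop:quantisation}, to $k_g=\kappa\,\omega_\Psi$. We therefore set $\omega_\Psi=k_g/\kappa=\dot r(s_0)/(\kappa r(s_0))=2\pi\dot r(s_0)/(\kappa L[\gamma])$, which matches Proposition \ref{prop:quantisation}(b) with $k+\nu=\dot r(s_0)$. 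This requires $\kappa\neq0$ for the coupled case; if $\kappa=0$ and $\dot r(s_0)=0$, the parallel is already a geodesic, which is a degenerate case.

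Third, and this is the main point, we check the holonomy. Since $\dot\gamma$ is periodic, a parallel field rotates by angle $2\pi\dot r(s_0)$ relative to $\dot\gamma$, so $X(\ell)=X(0)$ up to rotation by $2\pi\dot r(s_0)$. For $X,Y$ to define a section of $\Sigma_\nu S^1\otimes\gamma^*TN$, Lemma \ref{lemma:holonomy} requires $X(\ell)=(-1)^{2\nu}X(0)$, and likewise for $Y$. This holds exactly when $2\pi\dot r(s_0)\in 2\pi(\nu+\mathbb{Z})$, which is the hypothesis $\dot r(s_0)=k+\nu$. Note that $|\dot r|\le1$, so only $\dot r\in\{0,\pm\tfrac12,\pm1\}$ are possible; this limits the available cases but causes no difficulty. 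The main obstacle is bookkeeping: signs and orientation conventions for $k_g$, $\theta$ and $J$ must be consistent with Proposition \ref{prop:quantisation}, so that $k+\nu$ there equals $+\dot r(s_0)$ and not its negative. Since $\nu+\mathbb{Z}$ is symmetric under negation, this sign issue is harmless for the existence statement.
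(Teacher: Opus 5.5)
Your proposal is correct and follows essentially the same route as the paper. The paper also reduces the Dirac-geodesic equation on the parallel (via the coordinate ODE) to $-\omega_\Psi\ddot r(s_0)=\dot r(s_0)$, which is your relation $k_g=\kappa\,\omega_\Psi$. It matches this with Proposition~\ref{prop:quantisation}(b) to get $k+\nu=\dot r(s_0)$, and then parallel-translates any $X_0,Y_0$ with $\omega(X_0,Y_0)=\omega_\Psi$. Your one addition is the explicit computation that parallel fields rotate by $2\pi\dot r(s_0)$ relative to $\dot\gamma$, hence close up with sign $(-1)^{2\nu}$ exactly when $\dot r(s_0)\in\nu+\mathbb{Z}$; the paper leaves this holonomy check implicit in its appeal to Proposition~\ref{prop:quantisation}.
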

	\begin{proof}
		Recall that the Gaussian curvature of $N_\sigma$ is $K_N = -\frac{\ddot r}{r}$.
		By Equation \ref{eq:curv_relation} the Dirac-geodesic equations are $\nabla_{\dot \gamma}\dot \gamma - (K_N\circ \gamma) \, \omega_\Psi \, J \dot \gamma = 0 $ which in coordinates $(s,\theta)$ are
		$$
		\begin{cases}
			\ddot \gamma_1 -r\dot r (\dot\gamma_2)^2 - \omega_\Psi \,\ddot r \,\dot\gamma_2 &= 0\\
			\ddot \gamma_2 + 2 \frac{\dot r}{r} \dot \gamma_1\dot \gamma_2 + \omega_\Psi\,\frac{\ddot r}{r^2} \,\dot\gamma_1 &= 0
		\end{cases}
		$$
		
		Now consider a separating parallel $\gamma_{s_0} = (s_0, \frac{1}{r(s_0)}t)$ for which the second ODE vanishes and the first ODE rearranges to
		$$
		\frac{\dot r(s_0)}{r(s_0)} + \omega_\Psi \frac{\ddot r(s_0)}{r(s_0)}=0 \Leftrightarrow -\omega_\psi \ddot r(s_0) = \dot r(s_0)
		$$
		and by $L[\gamma_{s_0}] = 2\pi r(s_0)$ and Proposition \ref{prop:quantisation} (b) the constant becomes $\omega_{\Psi} = -\frac{k+\nu}{\ddot r(s_0)}$ where we used that the Gaussian curvature is the constant $-\frac{\ddot r(s_0)}{r(s_0)}$ along $\gamma_s$. Hence, we obtain
		$$
		k+\nu = \dot r(s_0)
		$$
		where $k + \nu$ is determined by the Euler characteristic and the total Gaussian curvature. Now, we can take any $X_0,Y_0 \in T_{\gamma(0)}N_\sigma$ such that $\omega(X_0,Y_0) = \omega_\Psi$ and parallel translate them to obtain the spinor $\Psi$.
	\end{proof}
	
	\paragraph{Dirac-geodesics in $S^2$} The round sphere of radius $1$ is described by the profile curve $\sigma:(-\frac{\pi}{2},\frac{\pi}{2})\rightarrow \mathbb{R}^2$ with $\sigma(s) = (\cos(s),\sin(s))$. By Proposition \ref{prop:parallels_dirac_geo} the parallels can be extended to Dirac-geodesics if $\dot r(s) = -\sin(s) = k+\nu$. The parallel always bounds a disc $\mathbb{D}_\gamma$ and the Gaussian curvature is $1$ which gives with Proposition \ref{prop:quantisation}
	$$
	k + \nu = \chi(\mathbb{D}_\gamma) - \frac{1}{2\pi} \int_{\mathbb{D}_\gamma}dA = 1 - \frac{1}{2\pi} \mathrm{Area}(\mathbb{D}_\gamma)
	$$
	
	For spin $\nu = 0$ we obtain $\sin(s_0) = 0$, $\mathrm{Area}(\mathbb{D}_\gamma) = 2\pi$ and obtain the equator in this case. This is the case where the Dirac-geodesic is uncoupled as $\omega_\Psi = 0$ in this case.
	
	For spin $\nu = \frac{1}{2}$ we obtain $\sin(s_0) \in \{\pm \frac{1}{2}\}$, $\mathrm{Area}(\mathbb{D}_\gamma)\in \{\pi, 3\pi\}$. This case is coupled and can be extended to a non-trivial Dirac-geodesic. Note that both of the parallels are rotations of each other.
	\paragraph{The hyperbolic plane} We can repeat the construction of surfaces of revolution in the Minkowski space $(\mathbb{R}^3,dx^2+dy^2 - dz^2)$ with profile curve $\sigma(t)=(r(t),h(t))$ satisfying $(\dot r)^2 - (\dot h)^2 = 1$. Then with the same embedding as before, we obtain the same metric $ds^2 + r^2 d\theta^2$ and Proposition \ref{prop:parallels_dirac_geo} is still valid. By choosing $\sigma(s)=(\sinh(s),\cosh(s))$ we obtain the hyperboloid model of the hyperbolic plane and the parallels $t\mapsto (s_0,t)$ can be extended to Dirac-geodesics, provided
	$$
	\dot r(s_0) = \cosh(s_0) = k + \nu = 1 + \frac{1}{2\pi} \mathrm{Area}(\mathbb{D}_\gamma)
	$$ 
	for $k \in \mathbb{N}$. In this case, we obtain coupled periodic Dirac-geodesics for both spin $\nu$ choices.
	
	\begin{remark}\label{rem:compact_target}
		Let $\Gamma<\mathrm{PSL}_2(\mathbb{R})$ be a Fuchsian group such that $\Sigma = \mathbb{H}^2/\Gamma$ is a genus $g$ surface equipped with its hyperbolic metric. If the fundamental domain is large enough and contains a periodic Dirac-geodesic then there is also one in $\Sigma$.
	\end{remark}

	\paragraph{Comparison with Ammann's kernel jump phenomena}
	As a consequence of a variational computation \cite{ammannquestion} derives the following statement.
	\begin{proposition}(Corollary 3 in \cite{ammannquestion})
		Let $(u,\Psi)$ be a Dirac-harmonic map from a closed manifold $(M,g)$. Then either
		\begin{enumerate}
			\item $(u,\Psi)$ is uncoupled or
			\item every open neighbourhood $U\subseteq \mathscr{C}^{\infty}(M,N)$ of $u$ contains an element $u_0\in U$ such that $\dim \ker \slashed D^{u_0} < \dim \ker \slashed D^{u}$.
		\end{enumerate}
	\end{proposition}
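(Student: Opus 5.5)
The plan is to compute the second-order behaviour of the energy along a deformation of $u$ while keeping track of the harmonic spinor, and to show that if the kernel of $\slashed D^{u_0}$ never drops near $u$, then $\mathcal R(u,\Psi)$ must vanish. Suppose (b) fails. Then there is a neighbourhood $U$ of $u$ on which $u_0\mapsto \dim\ker\slashed D^{u_0}$ is bounded below by $d:=\dim\ker\slashed D^u$. By upper semicontinuity of kernel dimension for a smooth family of elliptic operators (after identifying the twisted bundles $\Sigma M\otimes u_0^*TN$ with $\Sigma M\otimes u^*TN$ via parallel transport along short geodesics $s\mapsto \exp_u(s\xi)$), the dimension is then locally constant equal to $d$. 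Perturbation theory for a family of self-adjoint elliptic operators with locally constant kernel dimension then says the kernels form a smooth vector bundle over any smooth one-parameter family $u_s=\exp_u(s\xi)$, $\xi\in\Gamma(u^*TN)$. So $\Psi$ extends to a smooth family $\Psi_s\in\ker\slashed D^{u_s}$ with $\Psi_0=\Psi$.

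Next I differentiate the spinor part of the Lagrangian. Since $\slashed D^{u_s}\Psi_s=0$, the function $f(s)=\int_M\langle\Psi_s,\slashed D^{u_s}\Psi_s\rangle$ vanishes identically, so $f'(0)=0$. On the other hand, differentiating with $\slashed D^u$ self-adjoint and $\slashed D^u\Psi=0$ gives
$$0=f'(0)=\int_M\Big\langle\Psi,\Big(\tfrac{d}{ds}\Big|_{s=0}\slashed D^{u_s}\Big)\Psi\Big\rangle,$$
because the terms involving $\dot\Psi_0$ pair with $\slashed D^u\Psi=0$. The standard first-variation computation of \cite{diracHarmonic}, which is exactly what produces the curvature term in (\ref{eq:euler_lagrange}), identifies this integral with $-\int_M h(\mathcal R(u,\Psi),\xi)$, up to a nonzero constant and sign. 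The derivative of the twisted connection in direction $\xi$ is $R^N(\xi,du(\cdot))$, and the term is then rearranged using the symmetries of $R^N$. Hence $\int_M h(\mathcal R(u,\Psi),\xi)\,d\mathrm{Vol}_g=0$ for every $\xi$, so $\mathcal R(u,\Psi)\equiv0$ and $(u,\Psi)$ is uncoupled, which is (a).

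The main obstacle is the smooth dependence of the kernel. Local constancy of $\dim\ker$ has to be derived from the negation of (b), and then the kernel projection $P_s=\frac{1}{2\pi i}\oint(\lambda-\slashed D^{u_s})^{-1}d\lambda$ must be shown to be smooth in $s$. For this I would use the discreteness of the spectrum on a closed $M$: pick a small circle around $0$ that isolates the eigenvalues near zero for all small $s$. Constant rank then forces these eigenvalues to stay exactly at $0$, since the rank of $P_s$ counts eigenvalues inside the circle. One subtlety is that the spectrum of $\slashed D^{u_s}$ is real, so eigenvalues cannot leave zero without the kernel dimension dropping; this is exactly what the negation of (b) excludes. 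Once this is settled, setting $\Psi_s=P_s\Psi$ gives the required smooth family, and the rest is the routine variational identity above.
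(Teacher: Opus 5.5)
Your argument is correct and is essentially the variational computation that the note attributes to \cite{ammannquestion}; the note gives no proof of its own. Constancy of $\dim\ker$ near $u$ makes $P_s\Psi$ a smooth family in $\ker\slashed D^{u_s}$ along $u_s=\exp_u(s\xi)$, and differentiating $\int_M\langle\Psi_s,\slashed D^{u_s}\Psi_s\rangle\equiv 0$ at $s=0$ gives $\int_M h(\mathcal R(u,\Psi),\xi)\,d\mathrm{Vol}_g=0$ for all $\xi$, hence $\mathcal R(u,\Psi)\equiv 0$.
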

	
	In the setting of coupled Dirac-geodesics $(\gamma,\Psi)$ part (b) becomes an immediate observation in dimension two. There are small perturbations of $\gamma$ such that the holonomy condition in Proposition \ref{prop:quantisation} is destroyed and hence the parallel transport does not create spinors that close up according to the choice of the spin structure $\nu$. Thus, there are nearby curves $\gamma_0$ with $\dim \ker \slashed D ^{\gamma_0}=0$.
	
	\section{Some generalisations} 
	There are two immediate methods for obtaining Dirac-geodesics into higher dimensional targets.
	\begin{lemma}
		Let $(\gamma,\Psi)$ be a coupled periodic Dirac-geodesic in $N$ and let $B$ be a Riemannian manifold.
		\begin{enumerate}
			\item Let $\tilde\gamma:S^1 \rightarrow B$ be a periodic geodesic. Then $\gamma\times \tilde\gamma:S^1 \rightarrow N\times B$ with $\Psi \oplus 0$ is a coupled Dirac-geodesic in $N \times B$.
			\item Let $F:N \rightarrow B$ be a totally geodesic\footnote{$\nabla dF = 0$} isometric embedding. Then $(F\circ \gamma, F_*\Psi)$ is a coupled Dirac-geodesic in $B$.
		\end{enumerate}
	\end{lemma}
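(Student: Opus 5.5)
We verify the Dirac-geodesic system (\ref{eq:dirac_geo}) directly in each case. We also check that the periodicity (holonomy) condition of Lemma \ref{lemma:holonomy} survives, and that the curvature term stays nonzero so the new solution is still coupled. Throughout we parametrise on a common interval $[0,\ell]$ and use unit speed for $\gamma$. Reparametrisation is harmless, since Lemma 2.1 of \cite{diracgeo} is stated for arbitrary parametrisations once constant speed is fixed.

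For (a), we use the product Levi-Civita connection on $N\times B$ and the splitting $T(N\times B)=TN\oplus TB$. Its curvature is block diagonal: $R^{N\times B}(X\oplus 0,Y\oplus 0)(V\oplus W)=R^N(X,Y)V\oplus 0$. Write $c=(\gamma,\tilde\gamma)$. Then $\nabla_{\dot c}\dot c=\nabla_{\dot\gamma}\dot\gamma\oplus\nabla_{\dot{\tilde\gamma}}\dot{\tilde\gamma}=\nabla_{\dot\gamma}\dot\gamma\oplus 0$. Parallel transport along $c$ of $X\oplus 0$ is $(\nabla_{\dot\gamma}X)\oplus 0=0$, and similarly for $Y\oplus 0$. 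So the first equation reduces to $(\nabla_{\dot\gamma}\dot\gamma+R^N(X,Y)\dot\gamma)\oplus 0=0$.

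The remaining issues are periodicity and parametrisation. The two factors must share one parameter interval, so if $\tilde\gamma$ has a different period we use an iterate, or rescale $\tilde\gamma$ to constant (not unit) speed with the same domain. The spinor $\Psi\oplus 0$ obeys the same (anti)periodicity as $\Psi$, since the $TB$-component is zero. Rescaling $\tilde\gamma$ does not affect the $N$-equation, because the block structure decouples the factors. For coupling, the curvature term is $R^N(X,Y)\dot\gamma\oplus 0$, which is nonzero because $(\gamma,\Psi)$ was coupled.

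For (b), we use that a totally geodesic isometric embedding intertwines the connections, $\nabla^B_{dF(V)}dF(W)=dF(\nabla^N_VW)$, which is $\nabla dF=0$. Hence $\nabla_{\dot{(F\circ\gamma)}}F_*X=F_*\nabla_{\dot\gamma}X=0$, and likewise for $Y$ and for $\dot\gamma$. The key step, and the main obstacle, is the curvature identity $R^B(F_*X,F_*Y)F_*Z=F_*R^N(X,Y)Z$ for tangent vectors of $N$. We get it from the Gauss equation: the second fundamental form vanishes, so the tangential part agrees. The normal part of $R^B(F_*X,F_*Y)F_*Z$ also vanishes, by the Codazzi equation with zero second fundamental form; equivalently, extend $X,Y,Z$ to $N$ and differentiate twice using only the intertwining property.

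With this identity the first Dirac-geodesic equation for $(F\circ\gamma,F_*\Psi)$ is $F_*$ applied to the equation for $(\gamma,\Psi)$. Injectivity of $F_*$ shows that the curvature term stays nonzero, so the solution remains coupled. Periodicity is preserved because $F$ commutes with the closing-up of $\gamma$ and of the parallel fields.
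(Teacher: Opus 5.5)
Your proof is correct and follows the same route as the paper. In (a) the block-diagonal splitting of the product connection and curvature reduces everything to the $N$-factor. In (b) the key input is $R^B(F_*X,F_*Y)F_*Z=F_*R^N(X,Y)Z$, which the paper asserts and you justify via Gauss--Codazzi, with the (anti)periodicity and coupling checks (nonvanishing of $R^N(X,Y)\dot\gamma\oplus 0$, injectivity of $F_*$) being details the paper leaves implicit.
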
 
	\begin{proof}
		(a) follows by the splitting $\Sigma S^1 \otimes (\gamma\times \tilde\gamma)^*T(N\times B) = \Sigma S^1\otimes \gamma^*TN \oplus \Sigma S^1\otimes \tilde\gamma^*TB$ and the Riemannian curvature splitting as well for the defined spinor. Claim (b) follows by $F_*(R^N(X,Y)\dot\gamma) = R^B(F_*X,F_* Y)F_*\dot \gamma$.
	\end{proof}
	By (b) we obtain examples of Dirac-geodesics in any $S^n$ by embedding $S^2$ in a totally geodesic way and similarly for hyperbolic spaces $\mathbb{H}^n$. With (a) we also know that $S^2 \times S^2$ also contains such coupled Dirac-geodesics.
	
	\paragraph{A product construction} Lastly, we construct coupled Dirac-harmonic maps from manifolds of the form $S^1 \times M$.
	
	\begin{proposition}\label{prop:product}
		Let $(\gamma,\Psi)$ be a coupled periodic Dirac-geodesics in $N$ and $u:M \rightarrow B$ a harmonic map from a spin manifold $M$. Additionally
		\begin{enumerate}
			\item if $\dim M$ is odd assume the existence of harmonic spinors $\psi_1,\psi_2\in \Gamma(\Sigma M)$ such that $|\psi_1|^2-|\psi_2|^2=1$ and
			\item if $\dim M$ is even assume the existence of a harmonic spinor $\psi = (\psi^+,\psi^-)\in \Gamma(\Sigma^+M \oplus \Sigma^-M)$ such that $|\psi^+|^2-|\psi^-|^2=1$
		\end{enumerate} 
		Then there is a coupled Dirac-harmonic map with underlying map $(\gamma,u):S^1 \times M \rightarrow N \times B$.
	\end{proposition}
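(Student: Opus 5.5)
The plan is to build the twisted spinor on $S^1\times M$ as a tensor product of the Dirac-geodesic spinor $\Psi$ with the given harmonic spinors on $M$, put nothing in the $u^*TB$ summand, and check the two equations of (\ref{eq:euler_lagrange}) separately. We give $S^1\times M$ the product spin structure, with the spin $\nu$ structure on $S^1$ for which $(\gamma,\Psi)$ is a Dirac-geodesic. The twist bundle splits as $(\gamma,u)^*T(N\times B)=\gamma^*TN\oplus u^*TB$. We use the standard identifications of the spinor bundle of $S^1\times M$:
\begin{itemize}
\item for $\dim M$ even, $\Sigma(S^1\times M)\cong \Sigma_\nu S^1\otimes\Sigma M$, with $\mathrm{cl}_{\partial_t}$ acting by $\pm i$ on $\Sigma^\pm M$ and $\mathrm{cl}_{e_j}$ acting through $\Sigma M$;
\item for $\dim M$ odd, $\Sigma(S^1\times M)\cong \Sigma_\nu S^1\otimes(\Sigma M\oplus\Sigma M)$, with $\mathrm{cl}_{\partial_t}$ an off-diagonal (or $\mathrm{diag}(i,-i)$) matrix and $\mathrm{cl}_{e_j}$ acting diagonally up to sign.
\end{itemize}
In both cases the twisted Dirac operator decomposes as $\slashed D=\mathrm{cl}_{\partial_t}\nabla_{\partial_t}+\epsilon\,\slashed D_M$, where $\epsilon$ is a grading or sign.

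\textbf{Ansatz.} Write $\Psi=X+iY$ with $X,Y$ parallel along $\gamma$. By Lemma \ref{lemma:holonomy} they have exactly the (anti)periodicity of sections of $\Sigma_\nu S^1$. We can therefore regard $X,Y$ as sections of $\Sigma_\nu S^1\otimes\gamma^*TN$ built from a fixed unit spinor $\sigma$ on $S^1$: the sign $(-1)^{2\nu}$ of $\sigma$ cancels against the holonomy sign, so the product is well defined. In the odd case set
$$\Phi=\sigma\otimes\big((\psi_1,0)\otimes X+(0,\psi_2)\otimes Y\big)\oplus 0 .$$
In the even case set $\Phi=\sigma\otimes(\psi^+\otimes X+\psi^-\otimes Y)\oplus 0$. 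Possibly we will need to replace $Y$ by $-Y$, or insert an $i$, after computing signs.

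\textbf{Dirac equation.} $X,Y$ are parallel and $\sigma$ is parallel on $S^1$, so $\nabla_{\partial_t}\Phi=0$. The $M$-part is annihilated because $\psi_1,\psi_2$ (respectively $\psi$, hence $\psi^\pm$ separately, since $\slashed D_M$ swaps chirality) are harmonic. Thus $\slashed D\Phi=0$.

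\textbf{Tension equation.} The tension field of $(\gamma,u)$ is $(\nabla_{\dot\gamma}\dot\gamma,\Delta u)=(\nabla_{\dot\gamma}\dot\gamma,0)$. In $\mathcal R$:
\begin{itemize}
\item the $B$-component vanishes because $\Phi$ has no $u^*TB$ part and $R^{N\times B}$ splits;
\item in the $N$-component only $e_i=\partial_t$ contributes, since $d\gamma(e_j)=0$ for directions tangent to $M$.
\end{itemize}
So we must compute the coefficients $\langle\Phi^\alpha,\mathrm{cl}_{\partial_t}\Phi^\beta\rangle$ for $\alpha,\beta\in\{X,Y\}$. The diagonal terms drop out by skew-symmetry of $R^N$. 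The mixed terms give $\mathrm{Re}$ of expressions like $\langle\psi_1,i\psi_1\rangle$ and $\langle\psi_2,-i\psi_2\rangle$. After fixing the conventions (real part of the Hermitian product, $|\sigma|=1$) these combine into a multiple of $(|\psi_1|^2-|\psi_2|^2)$, respectively $(|\psi^+|^2-|\psi^-|^2)$. The hypothesis makes this equal to $1$, giving $\mathcal R=(R^N(X,Y)\dot\gamma,0)$ up to a universal constant. That constant can be absorbed by rescaling $X,Y$, which preserves parallelism and the holonomy condition. The equation $\nabla_{\dot\gamma}\dot\gamma=-R^N(X,Y)\dot\gamma$ of (\ref{eq:dirac_geo}) then gives the second equation of (\ref{eq:euler_lagrange}).

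\textbf{Coupling and main obstacle.} Coupling follows because $\mathcal R=(-\nabla_{\dot\gamma}\dot\gamma,0)\neq0$: as $(\gamma,\Psi)$ is coupled, $R^N(X,Y)\dot\gamma\not\equiv0$. The main obstacle is the bookkeeping in the curvature computation. One has to place $\psi_1,\psi_2$ (or $\psi^\pm$) in the right summands so that $\mathrm{cl}_{\partial_t}$ produces the \emph{difference} of norms with a nonvanishing real coefficient rather than $0$. One must also make sure the pointwise function obtained is constant, which is exactly what the norm condition guarantees.
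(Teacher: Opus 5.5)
Your framework matches the paper's: the product spin structure, the splitting of $\Sigma(S^1\times M)$, a zero component in $u^*TB$, and only $e_1=\partial_t$ contributing to $\mathcal R$. Your Dirac-equation argument is also fine. The gap is the ansatz itself, which is precisely the bookkeeping you flagged but left open.

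Placing $X$ only next to $\psi_1$ (resp.\ $\psi^+$) and $Y$ only next to $\psi_2$ (resp.\ $\psi^-$), with real coefficients, kills the curvature term. In the even case $\mathrm{cl}_{\partial_t}$ necessarily acts by $\pm i$ on $\Sigma^\pm M$, as you say. In the odd case with the diagonal convention it likewise preserves the two orthogonal summands. Hence, with $|\sigma|=1$,
\[
\langle\Phi^\alpha,\mathrm{cl}_{\partial_t}\Phi^\beta\rangle = X^\alpha X^\beta\,\mathrm{Re}\langle\psi_1,i\psi_1\rangle + Y^\alpha Y^\beta\,\mathrm{Re}\langle\psi_2,-i\psi_2\rangle = 0 .
\]
Each real part vanishes. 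Even before taking real parts, the coefficients are symmetric in $\alpha,\beta$, so the skew-symmetry of $R^N(\theta_\alpha,\theta_\beta)$ kills them. There are no mixed $X$--$Y$ terms at all; the terms you call ``mixed'' are diagonal ones. So $\mathcal R\equiv 0$, and the tension equation would force $\nabla_{\dot\gamma}\dot\gamma=0$, contradicting that $(\gamma,\Psi)$ is coupled.

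If you instead take $\mathrm{cl}_{\partial_t}$ off-diagonal in the odd case, mixed terms do appear, but with coefficient $\mathrm{Re}\langle\psi_1,\psi_2\rangle$. The hypothesis $|\psi_1|^2-|\psi_2|^2=1$ does not control that quantity. Flipping the sign of $Y$, or putting a scalar factor $i$ in front of one summand, changes none of this.

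The missing idea is that the pairing of $X$ with $Y$ must come from the complex structure of $\Sigma S^1$ inside each $\mathrm{cl}_{\partial_t}$-eigensummand. The $M$-spinors should only supply the weights. The paper keeps $\Psi=\sigma\otimes(X+iY)$ intact and sets $\hat\Psi=\Psi\otimes\psi_1\oplus\Psi\otimes\psi_2\oplus0\oplus0$ (resp.\ with $\psi^\pm$). Since $\mathrm{cl}_{\partial_t}=\mathrm{diag}(\mathrm{cl}^{S_1},-\mathrm{cl}^{S_1})$, one gets
\[
\langle\hat\Psi^\alpha,\mathrm{cl}_{\partial_t}\hat\Psi^\beta\rangle=\langle\Psi^\alpha,\mathrm{cl}^{S_1}_{\partial_t}\Psi^\beta\rangle\,\bigl(|\psi_1|^2-|\psi_2|^2\bigr),
\]
where $\langle\Psi^\alpha,\mathrm{cl}^{S_1}_{\partial_t}\Psi^\beta\rangle=\pm(X^\alpha Y^\beta-Y^\alpha X^\beta)$ is skew in $\alpha,\beta$. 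The norm condition then gives exactly $\mathcal R^{N\times B}=\mathcal R^N(\gamma,\Psi)\oplus 0$, with no rescaling needed. The equation $\nabla_{\dot\gamma}\dot\gamma=\mathcal R^N(\gamma,\Psi)$ for the Dirac-geodesic then yields the tension equation.

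In the odd case your placement can be rescued in the off-diagonal convention by using $\phi_{1,2}=(\psi_1\pm\psi_2)/\sqrt2$, since $2\,\mathrm{Re}\langle\phi_1,\phi_2\rangle=|\psi_1|^2-|\psi_2|^2$. In the even case nothing rescues it. There $\mathrm{cl}_{\partial_t}$ anticommutes with all $\mathrm{cl}_{e_j}$, so it is a multiple of the chirality operator and must preserve $\Sigma^\pm M$.
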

	\begin{proof}
		Take the induced spin structure on $S^1\times M$. Denote by $S_1 = \mathrm{pr}_1^* \Sigma S^1$ and $S_2 = \mathrm{pr}_2^* \Sigma M$ the pullbacks of the spinor bundles via the canonical projections. By Proposition 5 in \cite{klinker2002spinor} the spinor bundle on the product $S^1\times M$ is
		$$
		S = S'\oplus S'' = \begin{cases}
			S_1 \otimes (S_2^+\oplus S_2^-)& \text{ if } \dim M \text{ is even}\\
			(S_1 \otimes S_2) \oplus (S_1\otimes S_2)& \text{ if } \dim M \text{ is odd}
		\end{cases}
		$$
		where $S_2^+$ and $S_2^-$ in the even case are the even and odd subbundles of $S_2$. The Clifford multiplication along tangent vectors of $S^1$ and $M$ are given by
		$$
		\begin{bmatrix}
			\mathrm{cl}^{S_1}\otimes 1 & 0 \\
			0 & -\mathrm{cl}^{S_1} \otimes 1
		\end{bmatrix} \text{ and }
		\begin{bmatrix}
		0 & 1 \otimes \mathrm{cl}^{S_2} \\
		1 \otimes \mathrm{cl}^{S_2} & 0
		\end{bmatrix}
		$$
		respectively.\footnote{Note, that $S'$ and $S''$ are not the even and odd subbundles of $S$.} This implies that the Dirac operator on $S$ decomposes as well. By
		$$
		S \otimes (\gamma^*TN\oplus u^*TB) \cong \begin{cases}
			S_1 \otimes \gamma^*TN \otimes S_2  \oplus S_1 \otimes S_2\otimes u^*TB & \text{ if } \dim M \text{ is even}\\
			(S_1 \otimes \gamma^*TN \otimes S_2)^{\oplus 2}  \oplus (S_1 \otimes S_2\otimes u^*TB)^{\oplus 2}& \text{ if } \dim M \text{ is odd}
		\end{cases}
		$$
		the twisted spinor is defined as
		$$
		\hat\Psi = \begin{cases}
			\Psi\otimes \psi^+ \oplus \Psi \otimes \psi^- \oplus 0\oplus 0& \text{ if } \dim M \text{ is even}\\
			\Psi\otimes \psi_1 \oplus \Psi\otimes \psi_2 \oplus0 \oplus0& \text{ if } \dim M \text{ is odd}
		\end{cases}.
		$$
		By construction $\slashed D^{(\gamma,u)}\hat \Psi=0$. Let $e_1$ be a unit tangent vector along the $S^1$-factor and $\Psi = \Psi^\alpha\otimes \theta_\alpha$ a local expression for $\Psi$. The curvature term in the odd case is
		\begin{align*}
			\mathcal{R}^{N\times B}((\gamma,u),\hat \Psi) &= \frac{1}{2} \left \langle \Psi^\alpha\otimes \begin{bmatrix}
				\psi_1 \\ \psi_2
			\end{bmatrix} , \mathrm{cl}^{S}_{e_1}\left(\Psi^\beta\otimes \begin{bmatrix}
			\psi_1 \\ \psi_2
			\end{bmatrix} \right) \right \rangle_S R^{N\times B}(\theta_\alpha,\theta_\beta) d(\gamma,u)(e_i)\\
			&= \frac{1}{2} \left \langle \Psi^\alpha, \mathrm{cl}^{S_1}_{e_1}(\Psi^\beta) \right \rangle_{S_1} ( \langle \psi_1,\psi_1 \rangle_{S_2}-\langle \psi_2,\psi_2 \rangle_{S_2}) R^{N}(\theta_\alpha,\theta_\beta) \dot\gamma\\
			& = \mathcal{R}^N(\gamma,\Psi) \oplus 0
		\end{align*}
		and similarly in the even case. By $\Delta (\gamma,u) = (\nabla_{\dot\gamma}\dot \gamma, \Delta u)$ the assertion follows.
	\end{proof}
	
	\begin{remark}
		The proof can also be modified to products involving two coupled spin $0$ periodic Dirac-geodesics $(\gamma_i,\Psi_i)$ $i=1,2$ by defining 
		$$\hat\Psi =\Psi_1\otimes \begin{bmatrix}
			\psi_1 \\ \psi_2
		\end{bmatrix} \oplus  \begin{bmatrix}
		\phi_1 \\ \phi_2
		\end{bmatrix} \otimes \Psi_2.$$
		Here the spin $0$ is needed for the existence of the parallel spinors $\psi_1,\psi_2,\phi_1,\phi_2$ satisfying $|\psi_1|^2-|\psi_2|^2=1$ and $\langle \phi_1,\phi_2 \rangle+ \langle \phi_2,\phi_1 \rangle = 1$.
	\end{remark}
	
	\begin{corollary}
		There is a coupled Dirac-harmonic map from flat tori $\mathbb{T}^n$. The domain and target can be chosen to be compact K\"ahler.
	\end{corollary}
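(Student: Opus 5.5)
The plan is to apply Proposition \ref{prop:product} with $M=\mathbb{T}^{n-1}$, equipped with a flat metric and the trivial (spin $0$ in every factor) spin structure. We also need a coupled periodic Dirac-geodesic $(\gamma,\Psi)$ in a compact Kähler target. For the target we take $N=S^2$ with the round metric, which is compact Kähler. The spin $\tfrac12$ parallels at $\sin(s_0)=\pm\tfrac12$ constructed above give a coupled periodic Dirac-geodesic there, since $\omega_\Psi\neq 0$. The domain $S^1\times\mathbb{T}^{n-1}$ then carries the product spin structure: spin $\tfrac12$ on the first circle and spin $0$ on the remaining ones. After rescaling the first circle, we may arrange its length to match $L[\gamma]$, so the product is a flat torus $\mathbb{T}^n$, which is compact Kähler when $n$ is even.

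For the harmonic map $u:\mathbb{T}^{n-1}\to B$ we take a constant map into any compact Kähler $B$, say $B=S^2$, or a point. A constant map is trivially harmonic, and $N\times B=S^2\times S^2$ is compact Kähler. With a constant map $u$, the twisted Dirac operator on the $B$-part is just the untwisted one, but we put zero spinor there anyway. So only the spinors on $M$ matter.

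It remains to produce the spinors required in Proposition \ref{prop:product}. For the trivial spin structure on a flat torus, the spinor bundle is trivialised by parallel spinors, and parallel spinors are harmonic. If $n-1$ is odd, choose $\psi_1$ a constant spinor of norm $1$ and $\psi_2=0$. Then $|\psi_1|^2-|\psi_2|^2=1$. If $n-1$ is even, choose $\psi=(\psi^+,0)$ with $\psi^+$ a constant positive spinor of norm $1$. This is harmonic, since each half-spinor component of a parallel spinor is parallel. Proposition \ref{prop:product} then yields a coupled Dirac-harmonic map $(\gamma,u):\mathbb{T}^n\to S^2\times B$.

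The main subtlety is the Kähler claim for the domain: a flat torus is Kähler only in even dimension. So for the second sentence I would state the result for $n$ even, and for odd $n$ just assert the existence from a flat torus. The other check is that coupling survives the product. This is exactly the curvature computation in Proposition \ref{prop:product}: it reduces $\mathcal{R}$ to $\mathcal{R}^N(\gamma,\Psi)\oplus 0$, which is nonzero because $\omega_\Psi\neq 0$ and $K_{S^2}=1$.
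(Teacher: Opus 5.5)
Your proposal is correct and follows the paper's proof. You apply Proposition \ref{prop:product} with parallel spinors on a flat torus carrying the trivial spin structure and a spin $\tfrac12$ parallel in $S^2\cong\mathbb{CP}^1$, and you restrict to even total dimension for the K\"ahler claim. The only deviation is that you take $u$ constant (target $S^2\times S^2$ or $S^2$), whereas the paper uses $\mathrm{Id}_{\mathbb{T}^n}$, respectively a totally geodesic embedding $\mathbb{T}^{2n-1}\to\mathbb{T}^{2n}$; the paper's choice makes the resulting map depend non-trivially on the torus directions, but the statement as written does not require this.
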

	\begin{proof}
		The harmonic spinor assumption of Proposition \ref{prop:product} are satisfied on flat tori $\mathbb{T}^n$ by them admitting parallel spinors. Now take an example of a coupled Dirac-geodesic constructed in Section 2 and apply Proposition \ref{prop:product} to the identity map $\mathrm{Id}_{\mathbb{T}^n}$. The K\"ahler case follows by taking a coupled Dirac-geodesic into $\mathbb{CP}^1\cong S^2$ and $u:\mathbb{T}^{2n-1}\rightarrow \mathbb{T}^{2n}$ a totally geodesic embedding.
	\end{proof}
	
	\paragraph{On the spinor condition}
	By applying the Lichnerowicz formula $\slashed D^2\psi = \nabla^*\nabla\psi + \frac{1}{4}\mathrm{Scal}_g\, \psi$ to each harmonic spinor $\psi_1,\psi_2$ and applying the condition $|\psi_1|^2 - |\psi_2|^2 = 1$ gives the scalar curvature condition
	$$
	\mathrm{Scal}_g = 4 (|\nabla \psi_2|^2 -|\nabla \psi_1|^2).
	$$
	If $\mathrm{Scal}_g = 0$ then we obtain $|\nabla \psi_1|^2 = |\nabla\psi_2|^2$. There is also an interesting subcase of $\psi_2\equiv 0$ which also contains parallel spinors.
	
	\newpage
	\printbibliography
	
	\textbf{Università Commerciale Luigi Bocconi}\\
	\href{mailto:balazs.bekesi@unibocconi.it}{balazs.bekesi@unibocconi.it}
\end{document}